\newcommand*{\mailto}[1]{\href{mailto:#1}{\nolinkurl{#1}}}
\newcommand{\beq}{\begin{equation}}
	\newcommand{\eeq}{\end{equation}}
\newcommand{\ba}{\begin{align}}
	\newcommand{\ea}{\end{align}}
\numberwithin{equation}{section}
\newtheorem{theorem}{Theorem}[section]
\newtheorem{lemma}[theorem]{Lemma}
\newtheorem{corollary}[theorem]{Corollary}
\theoremstyle{definition}
\begin{document}
	
	
	\title[Ambarzumyan-type theorem]
	{Ambarzumyan-type theorem for the Sturm-Liouville operator on the lasso graph}
	
	\author[F.~Wang]{Feng Wang}
	\address{School of Mathematics and Statistics, Nanjing University of
		Science and Technology, Nanjing, 210094, Jiangsu, China}
	\email{\mailto{wangfengmath@njust.edu.cn}}
	
	\author[C.~F.~Yang]{CHUAN-FU Yang}
	\address{Department of Mathematics, School of Mathematics and Statistics, Nanjing University of
		Science and Technology, Nanjing, 210094, Jiangsu, People's
		Republic of China}
	\email{\mailto{chuanfuyang@njust.edu.cn}}

	\subjclass[2000]{34A55; 34B24; 47E05}
	\keywords{Sturm-Liouville operator, Ambarzumyan-type theorem, Lasso graph.}
	\date{\today}
	
	\begin{abstract}
{We consider the Sturm-Liouville operator on the lasso graph with a segment and a loop joined at one point, which has arbitrary length. The Ambarzumyan's theorem for the operator is proved, which says that if the eigenvalues of the operator coincide with those of the zero potential, then the potential is zero.}
	\end{abstract}
	
	\maketitle
	
\section{introduction}
	Inverse spectral problems consist in recovering the coefficients of an operator from their spectral characteristics. The first inverse spectral result on Sturm-Liouville operators is given by Ambarzumyan \cite{Am}, which describes the following theorem:

\emph{If} $q$ \emph{is a continuous real-valued function on $[0,1]$, and} $\{n^{2}\pi^{2}:n=0,1,2,\ldots\}$ \emph{is the set of eigenvalues of the boundary value problem}
\begin{align}
-y''+q(x)y=\lambda y,\; x\in(0,1),\quad y'(0)=y'(1)=0, \nonumber
\end{align}
\emph{then} $q(x)\equiv0$ on $[0,1]$.

This theorem is called Ambarzumyan$^{\textbf{,}}$s theorem, and has been generalized in many directions (see [4-6, 9-16, 18-19] and other papers). Here we mention Ambarzumyan-type theorems on star graphs \cite{Piv, YHY1}, Ambarzumyan-type theorems on trees \cite{CP, LY}, and Ambarzumyan-type theorems on graphs with cycles \cite{Kiss, YX}.

Differential operators on graphs often appear in natural sciences and engineering (see \cite{BCFK, BK} and the references therein). Such operators can be used to model the motion of quantum particles confined to certain low dimensional structures. In recent years, Ambarzumyan-type theorems on graphs have attracted the attention of many researchers. Some Ambarzumyan-type theorems for the Sturm-Liouville operator on graphs have been achieved in the literatures mentioned above and other works. Most of the works in this direction are devoted to the graphs with equal length edges. It is more difficult to study the Ambarzumyan-type theorems on graphs with unequal length edges. Nevertheless, it is worth noting that C. K. Law and E. Yanagida \cite{LY} studied the Ambarzumyan-type theorems on trees with unequal length edges.

In addition, we have also noticed that an Ambarzumian-type theorem for the Sturm-Liouville operator with a bounded real potential on arbitrary compact graphs is found by Davies \cite{Davies}. In this paper, using the method different from \cite{Davies}, we obtain an Ambarzumian-type theorem
for the Sturm-Liouville operator with a square integrable real potential on the lasso graph with arbitrary edge lengths. Our approach is based on the Hadamard's factorization theorem and the variational principle, which is simpler and can be applied to arbitrary compact graphs.

In this paper we consider the lasso graph $G$ (see Figure \ref{lasso}). The edge $e_{1}$ is a boundary edge of length $l_{1}$,  the edge $e_{2}$ is a loop of length $l_{2}$. Each edge $e_{j}$ is parameterized by the parameter $x_{j}\in[0,l_{j}]$. The value $x_{1}=0$ corresponds to the boundary vertex, and $x_{1}=l_{1}$ corresponds to the internal vertex. For the loop  $e_{2}$, both ends $x_{2}=0$ and $x_{2}=l_{2}$ correspond to the internal vertex.
\begin{figure}
  \centering
  \includegraphics[width=10cm]{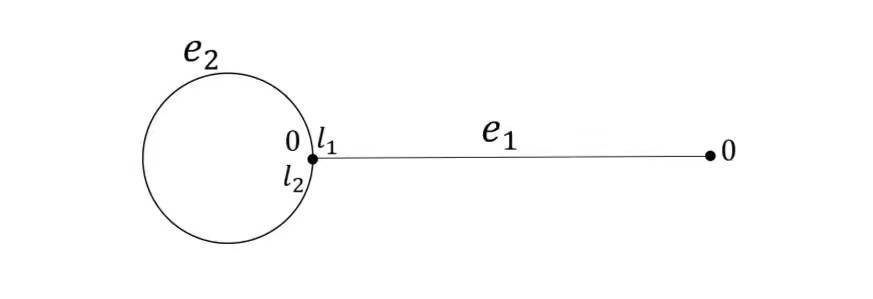}\\
  \caption{Lasso graph G}\label{lasso}
\end{figure}

A function $y$ on $G$ may be represented as a vector function $y=\{y_{j}\}_{j=1,2}$, where the function $y_{j}(x_{j})$, $x_{j}\in[0,l_{j}]$, is defined on the edge $e_{j}$. Consider the following Sturm-Liouville equations on $G$:
\begin{equation}\label{1}
  \varphi(y_{j}):=-y''_{j}(x_{j})+q_{j}(x_{j})y_{j}(x_{j})=\lambda y_{j}(x_{j}),\quad x_{j}\in[0,l_{j}],\quad j=1,2,
\end{equation}
where $q_{j}$, $j=1,2$, are real-valued functions from $L^{2}[0,l_{j}]$, $\lambda$ is the spectral parameter, the functions $y_{j}$, $y'_{j}$, $j=1,2$, are absolutely continuous on $[0,l_{j}]$ and satisfy the following matching conditions in the internal vertex:
\begin{equation}\label{2}
  y_{1}(l_{1})=y_{2}(l_{2})=y_{2}(0),\qquad y'_{1}(l_{1})+y'_{2}(l_{2})-y'_{2}(0)=0.
\end{equation}
Matching conditions (\ref{2}) are called the standard conditions. In electrical circuits, (\ref{2}) expresses Kirchhoff's law; in elastic string network, it expresses the balance of tension, and so on.

Denote $q=\{q_{j}\}_{j=1,2}$ called the potential on $G$. Let us consider the boundary value problem $L(q)$  on $G$ for equation (\ref{1}) with the standard matching conditions (\ref{2}) and with Neumann boundary condition in the boundary vertex:
\begin{equation}\label{3}
  y'_{1}(0)=0.
\end{equation}

It is obvious that the spectrum $\sigma(L(q)$ of the boundary value problem $L(q)$ is a discrete real sequence, bounded from below, diverging to $+\infty$. Let $\sigma(L(q))=\{\lambda_{n}(q)\}_{n=0}^{\infty}$, which can be arranged in an ascending order as (counting with their multiplicities)
\begin{align}
\lambda_{0}(q) \leq \lambda_{1}(q) \leq \cdots  \leq\lambda_{n}(q) \leq \cdots \longrightarrow +\infty. \nonumber
\end{align}

The main result in this paper is as follows.
\begin{theorem}\label{th}
If $\lambda_{n}(q)=\lambda_{n}(0)$ for all $n=0,1,2\cdots$, then $q_{j}(x_{j})=0$ a.e. on $[0,l_{j}]$, $j=1,2$.
\end{theorem}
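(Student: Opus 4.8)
The plan is to combine Hadamard's factorization theorem (to exploit the coincidence of the two spectra) with the variational principle (to convert information about $\lambda_0$ into the vanishing of $q$). First I would pin down the base problem: for $q\equiv0$ the function $y\equiv1$ (meaning $y_j\equiv1$ on $e_j$) satisfies (\ref{1})--(\ref{3}) with $\lambda=0$, and since the associated quadratic form $\mathfrak a_0[u]=\sum_{j=1,2}\int_0^{l_j}|u_j'|^2$ is nonnegative and vanishes only on functions that are constant on each edge — which by the continuity part of (\ref{2}) must agree at the internal vertex — the number $\lambda_0(0)=0$ is a simple eigenvalue of $L(0)$. Hence the hypothesis gives $\lambda_0(q)=0$, a simple eigenvalue of $L(q)$ as well.

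Next I would introduce the characteristic function. Let $c_j(\cdot,\lambda),s_j(\cdot,\lambda)$ solve (\ref{1}) on $[0,l_j]$ with $c_j(0,\lambda)=s_j'(0,\lambda)=1$ and $c_j'(0,\lambda)=s_j(0,\lambda)=0$. By (\ref{3}) a solution of $L(q)$ equals $a\,c_1$ on $e_1$ and $b_1c_2+b_2s_2$ on $e_2$; imposing (\ref{2}) gives a $3\times3$ homogeneous system for $(a,b_1,b_2)$ whose determinant, simplified via the Wronskian identity $c_js_j'-c_j's_j\equiv1$, is the entire function
\[
\Delta(\lambda;q)=c_1'(l_1,\lambda)\,s_2(l_2,\lambda)+c_1(l_1,\lambda)\bigl(s_2'(l_2,\lambda)+c_2(l_2,\lambda)-2\bigr),
\]
whose zeros, counted with multiplicity, are exactly $\sigma(L(q))$. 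Since $c_j,s_j$ are entire of order $1/2$ in $\lambda$, so is $\Delta(\cdot;q)$; it therefore has genus $0$ and, by Hadamard's factorization theorem, is determined by its zeros up to a nonzero multiplicative constant. As $\Delta(\cdot;q)$ and $\Delta(\cdot;0)$ have the same zero set with the same multiplicities (in particular the same simple zero at $0$), $\Delta(\lambda;q)=C\,\Delta(\lambda;0)$ for some constant $C\ne0$.

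Then I would fix $C$ and extract a trace identity. With $k=\sqrt\lambda$, $\alpha=\int_0^{l_1}q_1$, $\beta=\int_0^{l_2}q_2$, inserting the standard large-$|\lambda|$ asymptotics for $L^2$ potentials (e.g. $c_1(l_1,\lambda)=\cos kl_1+\frac{\alpha}{2k}\sin kl_1+o(k^{-1}e^{|\Im k|l_1})$, and likewise for $c_1',c_2,s_2,s_2'$) into $\Delta$ gives, after a short computation,
\begin{align*}
\Delta(\lambda;q)=\Delta(\lambda;0)&+\frac{1}{2k}\Bigl[\tfrac32(\alpha+\beta)\sin k(l_1+l_2)+\tfrac12(\alpha-\beta)\sin k(l_1-l_2)-2\alpha\sin kl_1\Bigr]\\
&+o\bigl(k^{-1}e^{|\Im k|(l_1+l_2)}\bigr),
\end{align*}
while $\Delta(\lambda;0)=-\sin kl_1\sin kl_2+2\cos kl_1\cos kl_2-2\cos kl_1$ does not vanish and has $|\Delta(\lambda;0)|\asymp e^{|\Im k|(l_1+l_2)}$ as $\lambda\to-\infty$. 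Comparing with $\Delta(\lambda;q)=C\,\Delta(\lambda;0)$ forces first $C=1$, and then that the bracketed sum is $o(e^{|\Im k|(l_1+l_2)})$; since $l_1+l_2$ is strictly the largest of the frequencies $l_1+l_2,\ |l_1-l_2|,\ l_1$, the coefficient of $\sin k(l_1+l_2)$ must vanish, i.e. $\int_G q=\alpha+\beta=0$. (One can push this a little further along the real $k$-axis to see the whole bracket vanishes identically, whence $\alpha=\beta=0$, but only $\int_G q=0$ is needed below.)

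Finally I would invoke the variational principle: $\lambda_0(q)=\min\{\mathfrak a_q[u]/\|u\|^2:u\in\calD\}$, where $\mathfrak a_q[u]=\sum_{j}\int_0^{l_j}(|u_j'|^2+q_j|u_j|^2)$ and $\calD$ consists of the $u$ with $u_j\in H^1[0,l_j]$ obeying the continuity condition in (\ref{2}). Testing with $u\equiv1$ and using $\int_G q=0$,
\[
0=\lambda_0(q)\le\frac{\mathfrak a_q[1]}{\|1\|^2}=\frac{\int_G q}{l_1+l_2}=0,
\]
so $u\equiv1$ attains the minimum and is therefore an eigenfunction of $L(q)$ for the eigenvalue $0$; substituting $u_j\equiv1$ into $-u_j''+q_ju_j=0$ yields $q_j(x_j)=0$ a.e. on $[0,l_j]$, $j=1,2$. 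The main obstacle is the middle stage: one must verify carefully that the zeros of $\Delta(\cdot;q)$ reproduce $\sigma(L(q))$ with the correct multiplicities — in particular at the $\lambda$ with $s_2(l_2,\lambda)=0$, where the elimination behind $\Delta$ is only formal — and must carry the $q$-dependent terms through the asymptotics of $\Delta$ with enough precision to pin down $C$ and to read off the top-frequency coefficient; the remaining steps are routine.
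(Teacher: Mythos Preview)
Your argument is correct and follows the same overall architecture as the paper---Hadamard factorization to identify $\Delta(\cdot;q)$ with $\Delta_0$, then the variational principle with the constant test function---but you find a genuine shortcut in the middle step. The paper (Lemma~\ref{lem2}) proves $[q_1]=0$ and $[q_2]=0$ separately via a case analysis on the edge lengths: the cases $l_1=l_2$, $l_2/l_1$ irrational, and $l_2/l_1$ rational (with two further subcases) are each handled by plugging in carefully chosen real sequences $\rho_n$. You observe instead that the variational step only needs $[q_1]+[q_2]=0$, and you extract this in one stroke: after writing the $1/k$-correction to $\Delta$ as a sum of sines with frequencies $l_1+l_2$, $|l_1-l_2|$, $l_1$, the coefficient $\tfrac32(\alpha+\beta)$ of the strictly dominant frequency $l_1+l_2$ must vanish for the correction to be $o(e^{|\Im k|(l_1+l_2)})$ along the imaginary axis, regardless of the ratio $l_2/l_1$. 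This avoids all case distinctions and is cleaner than the paper's route; the price is that you do not recover the stronger conclusion $[q_1]=[q_2]=0$ individually, but that information is not used in the final variational step anyway. Your caveat about matching algebraic multiplicities of zeros of $\Delta$ with geometric multiplicities of eigenvalues is well placed; the paper simply asserts this, and a clean justification goes through the self-adjointness of $L(q)$ together with a Wronskian/derivative computation at each zero.
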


\section{Analysis of the characteristic function}
	
	In this section we analyze the characteristic function of the  boundary value problem $L(q)$, which plays a key role in the proof of Theorem \ref{th}.

Let $C_{j}(x_{j},\lambda)$, $S_{j}(x_{j},\lambda)$, $j=1,2$, be the solutions of equation (\ref{1}) on the edge $e_{j}$ with the initial conditions
\begin{equation}
  C_{j}(0,\lambda)=S'_{j}(0,\lambda)=1,\qquad C'_{j}(0,\lambda)=S_{j}(0,\lambda)=0.  \nonumber
\end{equation}
For each fixed $x_{j}\in[0,l_{j}]$, the functions $C_{j}^{(\nu)}(x_{j},\lambda)$, $S_{j}^{(\nu)}(x_{j},\lambda)$, $j=1,2$, $\nu=0,1$, are entire in $\lambda$ of order $\frac{1}{2}$.

Then the solutions of equation (\ref{1}) which satisfy condition (\ref{3}) are represented as
\begin{align}\label{4}
       \begin{cases}
          y_{1}(x_{1},\lambda)=A_{1}(\lambda)C_{1}(x_{1},\lambda),  \\
           y_{2}(x_{2},\lambda)=A_{2}(\lambda)C_{2}(x_{2},\lambda)+B_{2}(\lambda)S_{2}(x_{2},\lambda),
        \end{cases}
\end{align}
where $A_{1}(\lambda)$, $A_{2}(\lambda)$ and $B_{2}(\lambda)$ are only dependent on $\lambda$. Substituting (\ref{4}) into matching conditions (\ref{2})  we obtain a linear algebraic system $s$ with respect to $A_{1}(\lambda)$, $A_{2}(\lambda)$ and $B_{2}(\lambda)$. The determinant $\Delta(\lambda)$ of $s$ has the form
\begin{equation}\label{5}
  \Delta(\lambda)=C_{1}(l_{1},\lambda)\Big(C_{2}(l_{2},\lambda)+S'_{2}(l_{2},\lambda)-2\Big)+C'_{1}(l_{1},\lambda)S_{2}(l_{2},\lambda).
\end{equation}
The function $\Delta(\lambda)$ is entire in $\lambda$ of order $\frac{1}{2}$ , and its zeros coincide with the eigenvalues of the boundary value problem $L(q)$. The function $\Delta(\lambda)$ is called the characteristic function for the boundary value problems $L(q)$.

Let $\lambda=\rho^{2}$, $\tau=Im\rho$, then  it follows from \cite{Frei-Yurko-book} that the following asymptotic formulas hold uniformly in $x_{j}\in[0,l_{j}]$:
\begin{align}\label{6}
       \begin{cases}
           S_{j}(x_{j},\lambda)=\frac{\sin\rho x_{j}}{\rho}-\frac{\cos\rho x_{j}}{2\rho^{2}}\int_{0}^{x_{j}}q_{j}(t)dt+o\left(\frac{e^{|\tau|x_{j}}}{\rho^{2}}\right), \\
           S'_{j}(x_{j},\lambda)=\cos\rho x_{j}+\frac{\sin\rho x_{j}}{2\rho}\int_{0}^{x_{j}}q_{j}(t)dt+o\left(\frac{e^{|\tau|x_{j}}}{\rho}\right), \\
           C_{j}(x_{j},\lambda)=\cos\rho x_{j}+\frac{\sin\rho x_{j}}{2\rho}\int_{0}^{x_{j}}q_{j}(t)dt+o\left(\frac{e^{|\tau|x_{j}}}{\rho}\right),\\
           C'_{j}(x_{j},\lambda)=-\rho\sin\rho x_{j}+\frac{\cos\rho x_{j}}{2}\int_{0}^{x_{j}}q_{j}(t)dt+o\left(e^{|\tau|x_{j}}\right),
       \end{cases}
\end{align}
as $|\rho|\rightarrow\infty$. According to (\ref{5}) and (\ref{6}) we have
\begin{align}\label{7}
 \Delta(\lambda)= \Delta_{0}(\lambda)+\frac{2(\sin\rho l_{1})(\cos\rho l_{2}-1)+(\cos\rho l_{1})(\sin\rho l_{2})}{2\rho}[q_{1}] \nonumber \\
 +\frac{2(\cos\rho l_{1})(\sin\rho l_{2})+(\sin\rho l_{1})(\cos\rho l_{2})}{2\rho}[q_{2}]\nonumber \qquad\\
 +o\left(\frac{e^{|\tau|(l_{1}+l_{2})}}{\rho}\right),\qquad |\rho|\rightarrow\infty,\qquad\qquad\qquad\;\;
\end{align}
where
\begin{align}\label{8}
  \Delta_{0}(\lambda)=2(\cos\rho l_{1})(\cos\rho l_{2}-1)-(\sin\rho l_{1})(\sin\rho l_{2}),
\end{align}
\begin{align}\label{9}
  [q_{j}]=\int_{0}^{l_{j}}q_{j}(t)dt,\qquad j=1,2. \qquad\qquad\qquad\qquad
\end{align}
The function $\Delta_{0}(\lambda)$ is entire in $\lambda$ of order $\frac{1}{2}$, and the characteristic function for the boundary value problems $L(0)$ with zero potential.

Let us show that the specification of the spectrum $\sigma(L(q))=\{\lambda_{n}(q)\}_{n=0}^{\infty}$ uniquely determines the characteristic function
$\Delta(\lambda)$. To this end, we consider together with $L(q)$ the boundary value problems $L(\widetilde{q})$ of the same form but with different $\widetilde{q}=\{\widetilde{q}_{j}\}_{j=1,2}$. We agree that if a certain symbol $\beta$ denotes an object related to $L(q)$, then $\widetilde{\beta}$ will denote the analogous object related to $L(\widetilde{q})$.

\begin{lemma}\label{lem1}
If $\lambda_{n}(q)=\lambda_{n}(\widetilde{q})$ for all $n=0,1,2\cdots$, then $\Delta(\lambda)=\widetilde{\Delta}(\lambda)$.
\end{lemma}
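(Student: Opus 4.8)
The plan is to identify $\Delta(\lambda)$ with $\widetilde\Delta(\lambda)$ via Hadamard's factorization theorem, exploiting that both are entire functions of order $\tfrac12$ whose zero sets are, by hypothesis, literally the same multiset $\{\lambda_n(q)\}_{n=0}^\infty = \{\lambda_n(\widetilde q)\}_{n=0}^\infty$. First I would record that an entire function of order $\tfrac12 < 1$ admits a Hadamard product $C \prod_{n}\big(1 - \lambda/\lambda_n\big)$ (no exponential factors, and the genus is $0$) over its nonzero zeros, with an extra factor $\lambda^k$ if $0$ is a zero of multiplicity $k$; here one must be slightly careful about whether $0\in\sigma(L(q))$ — for the zero potential $\lambda_0(0)=0$, so generically $0$ is a zero of both and the power of $\lambda$ in front matches automatically since the eigenvalue sequences agree with multiplicity. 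Consequently $\Delta(\lambda) = C\,\Delta_{\mathrm{can}}(\lambda)$ and $\widetilde\Delta(\lambda) = \widetilde C\,\Delta_{\mathrm{can}}(\lambda)$ for the \emph{same} canonical product $\Delta_{\mathrm{can}}$, whence $\Delta(\lambda) = (C/\widetilde C)\,\widetilde\Delta(\lambda)$; it remains only to show the constant ratio is $1$.

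To pin down $C/\widetilde C = 1$ I would use the asymptotics \eqref{7}--\eqref{8}: along a suitable ray or sequence in $\lambda$ (for instance $\lambda = \rho^2 \to -\infty$, i.e.\ $\rho = it$ with $t\to+\infty$, so that the trigonometric terms become hyperbolic and $\Delta_0(\lambda)$ grows like a definite exponential $\sim \tfrac12 e^{(l_1+l_2)|\rho|}$ up to lower order), one has $\Delta(\lambda) = \Delta_0(\lambda) + O\!\big(e^{|\tau|(l_1+l_2)}/\rho\big)$ and likewise $\widetilde\Delta(\lambda) = \Delta_0(\lambda) + O\!\big(e^{|\tau|(l_1+l_2)}/\rho\big)$, so $\Delta(\lambda)/\widetilde\Delta(\lambda) \to 1$. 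Combined with $\Delta/\widetilde\Delta \equiv C/\widetilde C$, this forces $C = \widetilde C$ and hence $\Delta \equiv \widetilde\Delta$.

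The main obstacle — really the only delicate point — is the bookkeeping at $\lambda = 0$ and the legitimacy of dividing by $\widetilde\Delta$: one needs that $\widetilde\Delta$ is not identically zero (clear, since it is an entire function with a discrete zero set) and that the zero of $\Delta$ at the origin, if any, has exactly the same order as that of $\widetilde\Delta$, which is exactly the content of the hypothesis $\lambda_n(q) = \lambda_n(\widetilde q)$ read at the bottom of the spectrum. A cleaner route that sidesteps this entirely is to apply Hadamard directly to the ratio: since $\Delta$ and $\widetilde\Delta$ are entire of order $\le \tfrac12$ with identical zeros (with multiplicity), the quotient $\Delta/\widetilde\Delta$ extends to an entire function with no zeros, again of order $\le\tfrac12$, hence is a nonzero constant $c$ by the order estimate (an entire zero-free function of finite order is $e^{g}$ with $g$ a polynomial of degree $\le\tfrac12$, i.e.\ constant); then the asymptotic comparison above gives $c = 1$. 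I would present this second version, as it is the shortest and avoids any case analysis at the origin.
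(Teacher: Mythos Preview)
Your proposal is correct and follows the same strategy as the paper: Hadamard factorization for entire functions of order $\tfrac12$, combined with the asymptotic comparison $\Delta(\lambda)\sim\Delta_0(\lambda)\sim\widetilde\Delta(\lambda)$ as $\lambda\to-\infty$ to pin down the multiplicative constant. The only cosmetic difference is that the paper routes the comparison through $\Delta_0$ (deriving explicit formulas for $C$ and $\widetilde C$ in terms of $C_0$ and the eigenvalue sequences) rather than comparing $\Delta$ and $\widetilde\Delta$ directly; your ``zero-free ratio of order $\le\tfrac12$ is constant'' shortcut is a clean repackaging of the same argument.
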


\begin{proof}
Denote
\begin{align}
  \lambda_{n}^{1}(0)=\begin{cases}
            \lambda_{n}(0)\qquad if\;\; \lambda_{n}(0)\neq0,\\
           1\qquad\quad\;\;\; if\;\;\lambda_{n}(0)=0,
       \end{cases} \nonumber
\end{align}
where $\lambda_{n}(0)$, $n=0,1,2\cdots$, are eigenvalues of the boundary value problems $L(0)$. By Hadamard's factorization theorem, we have
\begin{equation}\label{10}
  \Delta_{0}(\lambda)=C_{0}\prod_{n=0}^{\infty}\frac{\lambda_{n}(0)-\lambda}{\lambda_{n}^{1}(0)},
\end{equation}
where
\begin{equation}
  C_{0}=\frac{(-1)^{m}}{m!}\left(\frac{\partial^{m}}{\partial\lambda^{m}}\Delta_{0}(\lambda)\right)|_{\lambda=0},\nonumber
\end{equation}
and $m\geq0$ is the multiplicity of the zero eigenvalue of $L(0)$. Note that the infinite product in (\ref{10}) is absolutely convergent (e.g. see Theorem B.2 in \cite{Ges-Sim}).

Denote
\begin{align}\label{11}
  \lambda_{n}^{1}(q)=\begin{cases}
            \lambda_{n}(q)\qquad if\;\; \lambda_{n}(q)\neq0,\\
           1\qquad\quad\;\;\; if\;\;\lambda_{n}(q)=0.
       \end{cases}
\end{align}
Using Hadamard's factorization theorem again, we get
\begin{equation}\label{12}
  \Delta(\lambda)=C\prod_{n=0}^{\infty}\frac{\lambda_{n}(q)-\lambda}{\lambda_{n}^{1}(q)},
\end{equation}
where $C\neq0$ is a constant. Combining the equations (\ref{10}) and (\ref{12}) yields
\begin{equation}\label{13}
  \frac{\Delta(\lambda)}{\Delta_{0}(\lambda)}=\frac{C}{C_{0}}\prod_{n=0}^{\infty}\frac{\lambda_{n}^{1}(0)}{\lambda_{n}^{1}(q)}
  \prod_{n=0}^{\infty}\left(1+\frac{\lambda_{n}(q)-\lambda_{n}(0)}{\lambda_{n}(0)-\lambda}\right).
\end{equation}
Using properties of the characteristic functions and the eigenvalues one gets for large negative $\lambda$ (see \cite{Yur}, Sec. 2.3 and 2.4):
\begin{equation}
  \lim_{\lambda\rightarrow-\infty}\frac{\Delta(\lambda)}{\Delta_{0}(\lambda)}=1.\nonumber
\end{equation}
Consequently by (\ref{13}), we obtain
\begin{equation}\label{14}
C=C_{0}\left[\lim_{\lambda\rightarrow-\infty}\prod_{n=0}^{\infty}\frac{\lambda_{n}^{1}(0)}{\lambda_{n}^{1}(q)}
  \prod_{n=0}^{\infty}\left(1+\frac{\lambda_{n}(q)-\lambda_{n}(0)}{\lambda_{n}(0)-\lambda}\right)\right]^{-1}.
\end{equation}

Similarly,
\begin{equation}\label{15}
\widetilde{\Delta}(\lambda)=\widetilde{C}\prod_{n=0}^{\infty}\frac{\lambda_{n}(\widetilde{q})-\lambda}{\lambda_{n}^{1}(\widetilde{q})},
\end{equation}
where
\begin{equation}\label{16}
\widetilde{C}=C_{0}\left[\lim_{\lambda\rightarrow-\infty}\prod_{n=0}^{\infty}\frac{\lambda_{n}^{1}(0)}{\lambda_{n}^{1}(\widetilde{q})}
  \prod_{n=0}^{\infty}\left(1+\frac{\lambda_{n}(\widetilde{q})-\lambda_{n}(0)}{\lambda_{n}(0)-\lambda}\right)\right]^{-1},
\end{equation}
and
\begin{align}\label{16-1}
  \lambda_{n}^{1}(\widetilde{q})=\begin{cases}
            \lambda_{n}(\widetilde{q})\qquad if\;\; \lambda_{n}(\widetilde{q})\neq0,\\
           1\qquad\quad\;\;\; if\;\;\lambda_{n}(\widetilde{q})=0.
       \end{cases}
\end{align}

According to (\ref{11}), (\ref{14}), (\ref{16}) and (\ref{16-1}), together with assumptions of the lemma, we have $\lambda_{n}^{1}(q)=\lambda_{n}^{1}(\widetilde{q})$ and $C=\widetilde{C}$. Thus, it follows from (\ref{12}) and (\ref{15}) that $\Delta(\lambda)=\widetilde{\Delta}(\lambda)$.
\end{proof}

From Lemma \ref{lem1}, we can immediately get the following corollary.
\begin{corollary}\label{cor}
If $\lambda_{n}(q)=\lambda_{n}(0)$ for all $n=0,1,2\cdots$, then $\Delta(\lambda)=\Delta_{0}(\lambda)$.
\end{corollary}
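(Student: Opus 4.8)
The plan is to obtain this corollary immediately from Lemma \ref{lem1} by specializing the comparison potential $\widetilde{q}$ to the zero potential. Recall the convention fixed just before Lemma \ref{lem1}: if a symbol $\beta$ denotes an object attached to $L(q)$, then $\widetilde{\beta}$ denotes the analogous object attached to $L(\widetilde{q})$. First I would take $\widetilde{q}=\{0,0\}$, so that $L(\widetilde{q})$ is precisely the zero-potential problem $L(0)$. Each component $\widetilde{q}_{j}=0$ lies in $L^{2}[0,l_{j}]$, so $\widetilde{q}$ is an admissible potential and Lemma \ref{lem1} applies verbatim. Under this choice the eigenvalues become $\lambda_{n}(\widetilde{q})=\lambda_{n}(0)$, and the associated characteristic function $\widetilde{\Delta}(\lambda)$ is the characteristic function of $L(0)$, which is exactly $\Delta_{0}(\lambda)$ as introduced in (\ref{8}) and identified as the $L(0)$ characteristic function in the remark following (\ref{9}).

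With this identification the hypothesis of the corollary, $\lambda_{n}(q)=\lambda_{n}(0)$ for all $n=0,1,2,\dots$, is literally the hypothesis $\lambda_{n}(q)=\lambda_{n}(\widetilde{q})$ demanded by Lemma \ref{lem1}. Invoking that lemma then gives $\Delta(\lambda)=\widetilde{\Delta}(\lambda)$, and substituting $\widetilde{\Delta}=\Delta_{0}$ yields $\Delta(\lambda)=\Delta_{0}(\lambda)$, which is the assertion.

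No genuine obstacle arises in this step: the content is entirely contained in Lemma \ref{lem1}, and the argument is pure specialization. The only point requiring a moment of care is the bookkeeping that identifies $\widetilde{\Delta}$ with $\Delta_{0}$ under the tilde convention—equivalently, checking that the Hadamard product (\ref{10}) built from the eigenvalues $\lambda_{n}(0)$ reproduces the explicit $\Delta_{0}$ of (\ref{8}). Since the paper has already declared $\Delta_{0}$ to be the characteristic function of $L(0)$, this identification is immediate, and the corollary follows.
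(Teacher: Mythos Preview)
Your proposal is correct and is precisely the approach the paper takes: the corollary is stated immediately after Lemma \ref{lem1} with the remark that it follows directly, and your argument is exactly the intended specialization $\widetilde{q}=0$ together with the identification $\widetilde{\Delta}=\Delta_{0}$. There is nothing to add.
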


\section{Proof of Theorem 1.1}
	
	This section is devoted to the proof of Theorem \ref{th}. The following lemma plays an important role in the proof of Theorem \ref{th}.

\begin{lemma}\label{lem2}
If $\Delta(\lambda)=\Delta_{0}(\lambda)$, then $[q_{1}]=[q_{2}]=0$.
\end{lemma}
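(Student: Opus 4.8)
The plan is to squeeze $[q_{1}]$ and $[q_{2}]$ out of the $O(1/\rho)$ term in the asymptotic expansion \eqref{7}. Assume $\Delta(\lambda)=\Delta_{0}(\lambda)$. Subtracting $\Delta_{0}$ from both sides of \eqref{7}, restricting $\rho$ to the positive real axis (so that $\tau=\Im\rho=0$ and the remainder in \eqref{7} becomes $o(1/\rho)$), and multiplying through by $2\rho$, one obtains that
\begin{align}
h(\rho):=\big[2(\sin\rho l_{1})(\cos\rho l_{2}-1)+(\cos\rho l_{1})(\sin\rho l_{2})\big][q_{1}]
+\big[2(\cos\rho l_{1})(\sin\rho l_{2})+(\sin\rho l_{1})(\cos\rho l_{2})\big][q_{2}]\longrightarrow 0 \nonumber
\end{align}
as $\rho\to+\infty$, where $[q_{j}]$ is as in \eqref{9}.

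Next I would convert $h(\rho)$ into an ordinary trigonometric polynomial by the product-to-sum identities $2\sin A\cos B=\sin(A+B)+\sin(A-B)$ and $2\cos A\sin B=\sin(A+B)-\sin(A-B)$, which after collecting terms gives
\begin{align}
h(\rho)=\tfrac{3}{2}\big([q_{1}]+[q_{2}]\big)\sin\rho(l_{1}+l_{2})+\tfrac{1}{2}\big([q_{1}]-[q_{2}]\big)\sin\rho(l_{1}-l_{2})-2[q_{1}]\sin\rho l_{1}. \nonumber
\end{align}
The frequencies here are $l_{1}+l_{2}$, $|l_{1}-l_{2}|$ and $l_{1}$, and since $l_{1},l_{2}>0$ one always has $l_{1}+l_{2}>l_{1}$ and $l_{1}+l_{2}>|l_{1}-l_{2}|$, so the highest frequency is never resonant with the other two. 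To use this I pass to Cesàro means: from $h(\rho)\to 0$ and boundedness of $\sin\rho(l_{1}+l_{2})$ we get $\frac{1}{T}\int_{0}^{T}h(\rho)\sin\rho(l_{1}+l_{2})\,d\rho\to 0$, whereas the elementary limits $\frac{1}{T}\int_{0}^{T}\sin(\omega_{1}\rho)\sin(\omega_{2}\rho)\,d\rho\to\tfrac12$ when $\omega_{1}=\omega_{2}>0$ and $\to 0$ when $\omega_{1}\ne\omega_{2}$ show the same mean tends to $\tfrac{3}{4}([q_{1}]+[q_{2}])$. Hence $[q_{1}]+[q_{2}]=0$.

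Substituting $[q_{2}]=-[q_{1}]$ back leaves $h(\rho)=[q_{1}]\big(\sin\rho(l_{1}-l_{2})-2\sin\rho l_{1}\big)$. I would finish by evaluating along $\rho_{n}=(\pi/2+n\pi)/l_{1}\to+\infty$, for which $|\sin\rho_{n}l_{1}|=1$, so that $|\sin\rho_{n}(l_{1}-l_{2})-2\sin\rho_{n}l_{1}|\ge 2-1=1$; since $h(\rho_{n})\to 0$, this forces $[q_{1}]=0$, and therefore $[q_{2}]=-[q_{1}]=0$. The delicate point is precisely the middle step: because $l_{1},l_{2}$ are arbitrary the three frequencies may be arithmetically related (e.g. $l_{2}=2l_{1}$ makes $|l_{1}-l_{2}|=l_{1}$), so one cannot simply read off individual coefficients by picking convenient $\rho$'s; the averaging argument handles the provably isolated top frequency $l_{1}+l_{2}$, and the direct evaluation at points where $|\sin\rho l_{1}|=1$ disposes of the reduced two-term expression irrespective of any resonance. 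It is also worth briefly confirming that the $o(e^{|\tau|(l_{1}+l_{2})}/\rho)$ in \eqref{7} is uniform enough along the real axis to justify multiplying by $2\rho$, but this is immediate from the uniform asymptotics \eqref{6}.
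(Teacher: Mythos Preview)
Your proof is correct and follows a genuinely different route from the paper. The paper proceeds by case analysis on the ratio $l_{2}/l_{1}$: first $l_{1}=l_{2}$; then $l_{2}/l_{1}$ irrational, where one substitutes $\rho=2n\pi/l_{1}$ and $\rho=(2n+1)\pi/l_{2}$ to obtain two independent linear relations in $[q_{1}],[q_{2}]$; and finally $l_{2}/l_{1}$ rational, written as $l_{j}=k_{j}l$ with $\gcd(k_{1},k_{2})=1$ and split further into $k_{2}<k_{1}$ and $k_{2}>k_{1}$, each sub-case with its own test values of $\rho$. You instead rewrite $h(\rho)$ as a three-term sine polynomial, use Ces\`aro averaging against $\sin\rho(l_{1}+l_{2})$ to isolate the top-frequency coefficient $\tfrac{3}{2}([q_{1}]+[q_{2}])$ (legitimate because $l_{1}+l_{2}>|l_{1}-l_{2}|$ and $l_{1}+l_{2}>l_{1}$ always hold), and then dispose of the reduced two-term expression by evaluating at points where $|\sin\rho l_{1}|=1$. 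What your approach buys is uniformity: it handles all $l_{1},l_{2}>0$ at once without the rational/irrational dichotomy, and it sidesteps the delicate assertion in the paper's irrational case that $\bigl(\sin(2n\pi\,l_{2}/l_{1})\bigr)^{-1}=O(1)$, which as written is not true and really requires passing to a subsequence on which $|\sin(2n\pi\,l_{2}/l_{1})|$ stays bounded away from $0$. The paper's approach, in turn, is slightly more elementary in that it only evaluates at well-chosen points and never invokes an averaging argument. One minor wording fix: your orthogonality claim ``$\to 0$ when $\omega_{1}\ne\omega_{2}$'' should read $|\omega_{1}|\ne|\omega_{2}|$; in your application the lower frequencies have strictly smaller absolute value than $l_{1}+l_{2}$, so this does not affect the argument.
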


\begin{proof}
(1) When $l_{1}=l_{2}=l$, from (\ref{7}), together with $\Delta(\lambda)=\Delta_{0}(\lambda)$, we have
\begin{align}
\frac{(\sin\rho l)(3\cos\rho l-2)}{2\rho}[q_{1}]+\frac{3(\sin\rho l)(\cos\rho l)}{2\rho}[q_{2}]
 =o\left(\frac{e^{2|\tau|l}}{\rho}\right),\quad |\rho|\rightarrow\infty. \nonumber
\end{align}
In the above estimate, taking $\rho=\frac{(n+\frac{1}{2})\pi}{l}$  to get $[q_{1}]=0$ and $\rho=\frac{2n\pi+\arccos\frac{2}{3}}{l}$  to get $[q_{2}]=0$.

(2) When $l_{1}\neq l_{2}$, it can be proved in two cases.

(i) Let $\frac{l_{2}}{l_{1}}$ be an irrational number. From (\ref{7}), together with $\Delta(\lambda)=\Delta_{0}(\lambda)$, we have

\begin{align}\label{17}
 \frac{2(\sin\rho l_{1})(\cos\rho l_{2}-1)+(\cos\rho l_{1})(\sin\rho l_{2})}{2\rho}[q_{1}] \nonumber \\
 +\frac{2(\cos\rho l_{1})(\sin\rho l_{2})+(\sin\rho l_{1})(\cos\rho l_{2})}{2\rho}[q_{2}] \quad\nonumber \\
 =o\left(\frac{e^{|\tau|(l_{1}+l_{2})}}{\rho}\right),\quad |\rho|\rightarrow\infty.\qquad\qquad\qquad\;
\end{align}
Taking $\rho=\frac{2n\pi}{l_{1}}$ in the estimate (\ref{17}), we obtain
\begin{align}\label{18}
 \sin\left(\frac{l_{2}}{l_{1}}2n\pi\right)[q_{1}]+2\sin\left(\frac{l_{2}}{l_{1}}2n\pi\right)[q_{2}]=o(1),\qquad n\rightarrow\infty.
\end{align}
Note that
\begin{align}\label{19}
 \left(\sin\left(\frac{l_{2}}{l_{1}}2n\pi\right)\right)^{-1}=O(1),\qquad n\rightarrow\infty,
\end{align}
since $\frac{l_{2}}{l_{1}}$ is an irrational number. Combining (\ref{18}) and (\ref{19}) yields
\begin{align}
 [q_{1}]+2[q_{2}]=o(1),\qquad n\rightarrow\infty, \nonumber
\end{align}
which follows
\begin{align}\label{20}
 [q_{1}]+2[q_{2}]=0.
\end{align}
Taking $\rho=\frac{(2n+1)\pi}{l_{2}}$ in the estimate (\ref{17}), we obtain
\begin{align}\label{21}
 2\sin\left(\frac{l_{1}}{l_{2}}(2n+1)\pi\right)[q_{1}]+\sin\left(\frac{l_{1}}{l_{2}}(2n+1)\pi\right)[q_{2}]=o(1),\qquad n\rightarrow\infty.
\end{align}
Note that
\begin{align}\label{22}
 \left(\sin\left(\frac{l_{1}}{l_{2}}(2n+1)\pi\right)\right)^{-1}=O(1),\qquad n\rightarrow\infty,
\end{align}
since $\frac{l_{1}}{l_{2}}$ is an irrational number.  Combining (\ref{21}) and (\ref{22}) yields
\begin{align}
 2[q_{1}]+[q_{2}]=o(1),\qquad n\rightarrow\infty, \nonumber
\end{align}
which follows
\begin{align}\label{23}
 2[q_{1}]+[q_{2}]=0.
\end{align}
It is easy to see that equations (\ref{20}) and (\ref{23}) imply $[q_{1}]=[q_{2}]=0$.

(ii) Let $\frac{l_{2}}{l_{1}}$ be a rational number. Without losing generality, we assume $l_{1}=k_{1}l$, $l_{2}=k_{2}l$, where $k_{1}$, $k_{2}$
are two positive integers and relatively prime. From (\ref{7}), together with $\Delta(\lambda)=\Delta_{0}(\lambda)$, we have

\begin{align}\label{24}
 \frac{2(\sin\rho k_{1}l)(\cos\rho k_{2}l-1)+(\cos\rho k_{1}l)(\sin\rho k_{2}l)}{2\rho}[q_{1}]\!\!\!\!\! \nonumber \\
 +\frac{2(\cos\rho k_{1}l)(\sin\rho k_{2}k)+(\sin\rho k_{1}l)(\cos\rho k_{2}l)}{2\rho}[q_{2}] \nonumber \\
 =o\left(\frac{e^{|\tau|(k_{1}+k_{2})l}}{\rho}\right),\quad |\rho|\rightarrow\infty.\qquad\qquad\qquad\;\;\;\:
\end{align}
When $k_{1}=k_{2}$ then $[q_{1}]=[q_{2}]=0$ is obvious from Case (1). The following is divided into cases $k_{2}<k_{1}$ and $k_{2}>k_{1}$.

a) Case $k_{2}<k_{1}$.
In the estimate (\ref{24}), taking $\rho=\frac{(2k_{1}n+1)\pi}{k_{1}l}$ and let $n\rightarrow\infty$, one can get
\begin{align}\label{25}
 \sin\left(\frac{k_{2}\pi}{k_{1}}\right)[q_{1}]+2\sin\left(\frac{k_{2}\pi}{k_{1}}\right)[q_{2}]=0,
\end{align}
which follows
\begin{equation}\label{26}
  [q_{1}]=-2[q_{2}],
\end{equation}
since $0<\sin\left(\frac{k_{2}\pi}{k_{1}}\right)<1$.
In the estimate (\ref{24}), taking $\rho=\frac{(2k_{1}n+\frac{1}{2})\pi}{k_{1}l}$ and let $n\rightarrow\infty$, one can get
\begin{align}\label{27}
 2\left(\cos\left(\frac{k_{2}\pi}{2k_{1}}\right)-1\right)[q_{1}]+\cos\left(\frac{k_{2}\pi}{2k_{1}}\right)[q_{2}]=0.
\end{align}
Substituting (\ref{26}) into (\ref{27}), we have
\begin{align}
 \left(4-3\cos\left(\frac{k_{2}\pi}{2k_{1}}\right)\right)[q_{2}]=0. \nonumber
\end{align}
Since $4-3\cos\left(\frac{k_{2}\pi}{2k_{1}}\right)\neq0$, this yields $[q_{2}]=0$. Thus $[q_{1}]=0$ from (\ref{26}).

b) Case $k_{2}>k_{1}$.
In the estimate (\ref{24}), taking $\rho=\frac{(2k_{2}n+1)\pi}{k_{2}l}$ and let $n\rightarrow\infty$, one can get
\begin{align}\label{28}
 -4\sin\left(\frac{k_{1}\pi}{k_{2}}\right)[q_{1}]-\sin\left(\frac{k_{1}\pi}{k_{2}}\right)[q_{2}]=0,
\end{align}
which follows
\begin{equation}\label{29}
  [q_{2}]=-4[q_{1}],
\end{equation}
since $0<\sin\left(\frac{k_{1}\pi}{k_{2}}\right)<1$. In the estimate (\ref{24}), taking $\rho=\frac{(2k_{2}n+\frac{1}{2})\pi}{k_{2}l}$ and let $n\rightarrow\infty$, one can get
\begin{align}\label{30}
 \left(-2\sin\left(\frac{k_{1}\pi}{2k_{2}}\right)+\cos\left(\frac{k_{1}\pi}{2k_{2}}\right)\right)[q_{1}]+2\cos\left(\frac{k_{1}\pi}{2k_{2}}\right)[q_{2}]=0.
\end{align}
Substituting (\ref{29}) into (\ref{30}), we have
\begin{align}
 \left(2\sin\left(\frac{k_{1}\pi}{2k_{2}}\right)+7\cos\left(\frac{k_{1}\pi}{2k_{2}}\right)\right)[q_{1}]=0. \nonumber
\end{align}
Since $2\sin\left(\frac{k_{1}\pi}{2k_{2}}\right)+7\cos\left(\frac{k_{1}\pi}{2k_{2}}\right)\neq0$, this yields $[q_{1}]=0$.
Thus $[q_{2}]=0$ from (\ref{29}).
The proof of Lemma \ref{lem2} is complete.
\end{proof}

Let us introduce the Hilbert space $H:=L^{2}(0,l_{1})\oplus L^{2}(0,l_{2})$ with the inner product
\begin{align}
(f,g)=\sum_{j=1}^{2}\int_{0}^{l_{j}}f_{j}(x)\overline{g_{j}(x)}dx,\nonumber
\end{align}
where $f=(f_{1},,f_{2})^{T}\in H$, $g=(g_{1},g_{2})^{T}\in H$, and $g^{T}$ denotes the transpose of the vector $g$. The domain of self-adjoint operator $L(q)$ is
\begin{align}
D(L(q))=\Big\{y=(y_{1},y_{2})^{T}\in H\:|\:y_{j}\in AC(0,l_{j}),\;y'_{j}\in AC[0,l_{j}],\;j=1,2, \quad\quad\nonumber\\
\; satisfying \;(\ref{2})\; and\; (\ref{3}),\;\varphi(y):=(\varphi(y_{1}),\varphi(y_{2}))^{T}\in H\Big\}, \!\!\!\nonumber
\end{align}
where $AC[0,l_{j}]$ represents a set of all absolutely continuous functions on $[0,l_{j}]$.
\\

 \textbf{\textit{Proof of Theorem \ref{th}}}. It is readily verified that the operator $L(0)$ is non-negative and $0\in\sigma(L(0))$, so zero is its smallest eigenvalue, namely $\lambda_{0}(0)=0$.

Next we show that $y_{0}=(1,1)^{T}$ is an eigenfunction of $L(q)$ corresponding to the eigenvalue zero. By the variational principle, we obtain
\begin{align}
0=\lambda_{0}(0)=\lambda_{0}(q)=\inf_{0\neq y\in D(L(q))}\frac{(\varphi(y),y)}{(y,y)} \qquad\qquad\qquad\qquad\quad\nonumber\\
=\inf_{0\neq y\in D(L(q))}\frac{\sum\limits_{j=1}^{2}\Big(-\int_{0}^{l_{j}}y''_{j}(x)\overline{y_{j}(x)}dx
+\int_{0}^{l_{j}}q_{j}(x)|y_{j}(x)|^{2}dx\Big)}{\sum\limits_{j=1}^{2}\int_{0}^{l_{j}}|y_{j}(x)|^{2}dx}.\!\!\!\!\!\nonumber
\end{align}
Now  $y_{0}\in D(L(q))$ is obvious, and so
\begin{align}
0\leq \frac{(\varphi(y_{0}),y_{0})}{(y_{0},y_{0})}=\frac{\sum\limits_{j=1}^{2}\int_{0}^{l_{j}}q_{j}(x)dx}{\sum\limits_{j=1}^{2}l_{j}}
=\frac{[q_{1}]+[q_{2}]}{l_{1}+l_{2}},\nonumber
\end{align}
by Corollary \ref{cor} and Lemma \ref{lem2}, the right hand side is exactly zero, which implies that the test function $y_{0}$ is an eigenfunction of $L(q)$ corresponding to the eigenvalue zero. Substituting $y_{0}$, which is the eigenfunction of eigenvalue zero, into equations (\ref{1}), we obtain $q_{j}=0$ in $L^{2}[0,l_{j}]$, $j=1,2$. The proof is finished. $\square$

\qquad
	
\noindent {\bf Acknowledgments.}
	This work was supported in part by  the National Natural Science Foundation of China (11871031) and the National Natural Science Foundation of Jiang Su (BK20201303).

\end{document}